\documentclass[12pt]{article} 

\usepackage[margin=1in]{geometry}

\usepackage{setspace}
\setstretch{1}
\usepackage{amssymb,amsfonts,amsmath,latexsym,amsthm}
\usepackage[]{graphicx}
\usepackage{bm}
\usepackage{dsfont}
\usepackage[round,comma]{natbib}
\usepackage[utf8]{inputenc}
\usepackage[colorlinks,citecolor=blue,linkcolor=blue,urlcolor=blue]{hyperref}

\theoremstyle{plain}
\newtheorem{theorem}{Theorem}[section]

\newtheorem{lemma}[theorem]{Lemma}
\newtheorem{proposition}[theorem]{Proposition}

\theoremstyle{definition}

\theoremstyle{remark}

\allowdisplaybreaks 


\DeclareMathOperator*{\PY}{PY}
\renewcommand{\v}{\bm v}
\newcommand{\z}{\bm z}
\newcommand{\y}{\bm y}
\newcommand{\x}{\bm x}
\newcommand{\C}{\bm C}



\DeclareMathOperator*{\Beta}{Beta}



\newcommand{\N}{\mathbb{N}}

\newcommand{\E}{\mathbb{E}}
\renewcommand{\Pr}{\mathbb{P}}
\newcommand{\I}{\mathds{1}}

\newcommand{\ddr}{\mathrm{d}}



\newcommand{\iid}{\stackrel{\mathrm{iid}}{\sim}}
\newcommand{\ind}{\stackrel{\mathrm{ind}}{\sim}}

\title{A simple proof of Pitman--Yor's Chinese restaurant process from its stick-breaking representation}
\author{Caroline Lawless and Julyan Arbel\\
Univ. Grenoble Alpes, Inria, CNRS, LJK, 38000 Grenoble, France}
\date{}


\begin{document}
\maketitle
\begin{abstract}
For a long time, the Dirichlet process has been the gold standard discrete random measure in Bayesian nonparametrics. The Pitman--Yor process provides a simple and mathematically tractable generalization, allowing for a very flexible control of the clustering behaviour. Two commonly used representations of the Pitman--Yor process are the stick-breaking process and the Chinese restaurant process. The former is a constructive representation of the process which turns out very handy for practical implementation, while the latter describes the partition distribution induced. Obtaining one from the other is usually done indirectly with use of measure theory. In contrast, we provide here an elementary proof of  Pitman--Yor's Chinese Restaurant process from its stick-breaking representation.  
\end{abstract}

\section{Introduction}
The Pitman--Yor process defines a rich and flexible class of random probability measures which was developed by \citet{perman1992size} and further investigated by \citet{Pit:95,pitman1997two}. 
It is a simple  generalization of the Dirichlet process \citep{ferguson1973bayesian}, whose mathematical tractability contributed to its popularity in machine learning theory \citep{caron2017generalized}, probabilistic models for linguistic applications \citep{teh2006hierarchical,wood2011sequence}, excursion theory \citep{perman1992size,pitman1997two}, measure-valued diffusions in population genetics \citep{petrov2007two,feng2010some}, combinatorics \citep{vershik2004markov,kerov2006coherent} and statistical physics \citep{derrida1981random}. 

Its most prominent role is perhaps in Bayesian nonparametric statistics where it is used as a prior distribution, following the work of \cite{Ish:Jam:01}. 
Applications in this setting embrace a variety of inferential problems, including species sampling \citep{Fav:etal:09,Nav:etal:08,arbel2015discovery}, survival analysis and graphical models in genetics \citep{Jar:etal:10,ni2018heterogeneous}, image segmentation \citep{Sud:Jor:09}, curve estimation \citep{canale2017pitman}, exchangeable feature allocations \citep{battiston2018characterization} and time-series and econometrics \citep{caron2017generalized,leisen}. 

Last but not least, the Pitman--Yor process is also employed  in the context of nonparametric mixture modeling, thus generalizing the celebrated Dirichlet process mixture model of \cite{Lo(84)}.  Nonparametric mixture models based on the Pitman--Yor process are characterized by a more flexible parameterization than the Dirichlet process mixture model, thus allowing for a better control of the clustering behaviour \citep{deblasi2015gibbs}.  
In addition, see \cite{Ish:Jam:01,favaro2013slice,arbel2019stochastic} for posterior sampling algorithms, \cite{scricciolo2014adaptive,miller2014inconsistency} for asymptotic properties, and \cite{scarpa2009bayesian,canale2017pitman} for spike-and-slab extensions.

The Pitman--Yor process has the following stick-breaking representation:
if
$\v_i \ind \Beta(1-d,\alpha+id)$ for $i = 1,2,\ldots$ with $d\in(0,1)$ and $\alpha>-d$, if $\bm\pi_j = \v_j\prod_{i = 1}^{j-1} (1 -\v_i)$ for $j = 1,2,\ldots$, and if $\bm\theta_1,\bm\theta_2,\ldots\iid H$, then the discrete random probability measure
\begin{align}\label{equation:stick}
\bm P = \sum_{j = 1}^\infty \bm\pi_j \delta_{\bm\theta_j}
\end{align}
is distributed according to the Pitman--Yor process, $\PY(\alpha,d,H)$, with concentration parameter $\alpha$, discount parameter $d$, and base distribution $H$. 

The Pitman--Yor process induces the following partition distribution: 
if $\bm P\sim\PY(\alpha,d,H)$, for some nonatomic probability distribution $H$, we observe data $\x_1,\ldots,\x_n|\bm P\,\iid\, \bm P$, and $\C$ is the partition of the first $n$ integers $\{1,\ldots,n\}$ induced by data, then
\begin{align}\label{eq:CRP}
\Pr(\C = C) = \frac{d^{\mid C \mid}}{(\alpha)_{(n)}}\Big(\frac{\alpha}{d}\Big)_{(\mid C \mid)}\prod_{c \in C}(1-d)_{(\mid c \mid - 1)},
\end{align}
where the multiplicative factor before the product in~\eqref{eq:CRP} is also commonly (and equivalently) written as $(\prod_{i=1}^{\mid C \mid-1}\alpha+id)/(\alpha)_{(n-1)}$  in the literature. When the discount parameter $d$ is set to zero, the Pitman--Yor process reduces to the Dirichlet process and the partition distribution~\eqref{eq:CRP} boils down to the celebrated Chinese Restaurant process \citep[CRP, see][]{antoniak1974mixtures}. By abuse of language, we call  the partition distribution~\eqref{eq:CRP} the Pitman--Yor's CRP. 
Under the latter partition distribution, the number of parts in a partition $C$ of $n$ elements, $k_n=\vert C \vert$, grows to infinity as a power-law of the sample size, $n^d$ \citep[see][for details]{pitman2003poisson}. This Pitman--Yor power-law growth is more in tune with most of empirical data \citep{clauset2009power} than the logarithmic growth induced by the Dirichlet process CRP, $\alpha \log n$.

The purpose of this note is to provide a simple proof of Pitman--Yor's CRP~\eqref{eq:CRP} from its stick-breaking representation~\eqref{equation:stick} (Theorem~\ref{theorem:main}). This generalizes the derivation by \cite{miller2018elementary} who obtained the Dirichlet process CRP \citep{antoniak1974mixtures} from Sethuraman's stick-breaking representation \citep{sethuraman1994constructive}. In doing so, we also provide the marginal distribution of the allocation variables vector~\eqref{eq:allocation-variables} in Proposition~\ref{prop:A}.

\section{Partition distribution from stick-breaking}
\label{section:main}

Suppose we make $n$ observations, $z_1,\ldots,z_n.$ We denote the set $\{1,\ldots,n \}$ by $[n]$. Our observations induce a partition of $[n]$, denoted $C= \{c_1,\ldots,c_{k_n}\}$ where $c_1,\ldots,c_{k_n}$ are disjoint sets and $\bigcup_{i=1}^{k_n} c_i = [n],$ in such a way that $z_i$ and $z_j$ belong to the same partition if and only if $z_i=z_j.$ We denote the number of parts in the partition $C$ by ${k_n}=|C|$ and we denote the number of elements in partition $j$ by $|c_j|.$ We use bold font to represent random variables. We write $(x)_{(n)}= \prod_{j=0}^{n-1}(x+j)$ to denote the rising factorial.

\begin{theorem}
    \label{theorem:main}
    Suppose
    \begin{align*}
        & \v_{i} \ind\Beta(1-d,\alpha+id) \text{ for } i=1,2,\ldots, \\ 
        & \bm\pi_j = \v_j\prod_{i = 1}^{j-1} (1 -\v_i) \text{ for } j = 1,2,\ldots
    \end{align*}
    Let allocation variables be defined by
    \begin{align}\label{eq:allocation-variables}
        \z_1,\ldots,\z_n|\bm\pi=\pi \,\iid\, \pi, \text{ meaning, } \Pr(\z_i = j\mid \pi) = \pi_j,
    \end{align}
    and $\C$ denote the random partition of $[n]$ induced by $\z_1,\ldots,\z_n$. Then
\begin{align*}
\Pr(\C = C) = \frac{d^{\mid C \mid}}{(\alpha)_{(n)}}\Big(\frac{\alpha}{d}\Big)_{(\mid C \mid)}\prod_{c \in C}(1-d)_{(\mid c \mid - 1)}.
\end{align*}
\end{theorem}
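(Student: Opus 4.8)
The plan is to first derive the joint probability mass function of the allocation vector $(\z_1,\dots,\z_n)$ (the content of Proposition~\ref{prop:A}) and then obtain $\Pr(\C=C)$ by summing it over all allocation vectors that induce the partition $C$. For the first part, condition on $\bm\pi$: by~\eqref{eq:allocation-variables} and conditional independence, $\Pr(\z_1=z_1,\dots,\z_n=z_n\mid\bm\pi)=\prod_{j\ge1}\bm\pi_j^{m_j}$ with $m_j=\#\{i\le n:z_i=j\}$. Substituting $\bm\pi_j=\v_j\prod_{i<j}(1-\v_i)$ turns this into $\prod_{j\ge1}\v_j^{m_j}(1-\v_j)^{M_j}$ with $M_j=\sum_{l>j}m_l$, so by independence of the Beta variables $\v_i$ one gets $\Pr(\z=z)=\prod_{j\ge1}\E[\v_j^{m_j}(1-\v_j)^{M_j}]$, a product in which all but finitely many factors equal $1$. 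Evaluating each Beta moment as a ratio of rising factorials gives $\Pr(\z=z)=\prod_{j\ge1}\dfrac{(1-d)_{(m_j)}(\alpha+jd)_{(M_j)}}{(\alpha+1+(j-1)d)_{(m_j+M_j)}}$.

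Now $\Pr(\C=C)=\sum_z\Pr(\z=z)$, the sum over allocation vectors inducing $C$. Such a vector is described by the increasing list $j_1<\dots<j_k$ of occupied table indices ($k=|C|$) together with a bijection between the $k$ blocks of $C$ and these tables; fixing the bijection fixes an occupancy sequence $(n_1,\dots,n_k)$ — a permutation of the block sizes — and I set $L_r=n_r+\dots+n_k$. With the bijection fixed I would carry out the sum over $j_1<\dots<j_k$ one index at a time, starting from the innermost $j_k$. Over a run of consecutive empty tables the factors $\E[(1-\v_j)^{L}]=(\alpha+jd)_{(L)}/(\alpha+1+(j-1)d)_{(L)}$ multiply to $\bigl(\prod_j\tfrac{\alpha+jd}{\alpha+jd+L}\bigr)$ times a ratio of two rising factorials, and summing the resulting tail reduces, at every stage, to the identity $\sum_{l\ge0}\Gamma(x+l)/\Gamma(y+l)=\Gamma(x)\big/\bigl((y-x-1)\Gamma(y-1)\bigr)$ — provable in one line by telescoping, convergent because $y-x>1$ (here $y-x$ is $L_r/d$ less a nonnegative integer, hence exceeds $1$ since $0<d<1$ and no block is empty). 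Iterating this $k$ times, and tracking the pattern it generates (each processed gap leaves a factor $\prod_i(\alpha+(j_{r-1}+i)d)$ and a denominator $L_r-(k-r+1)d$), collapses the $k$-fold sum to $\dfrac{d^{k}(\alpha/d)_{(k)}}{(\alpha)_{(n)}}\Bigl(\prod_{c\in C}(1-d)_{(|c|-1)}\Bigr)\dfrac{\prod_{s=1}^{k-1}(n_s-d)}{\prod_{s=1}^{k-1}\bigl(L_s-(k-s+1)d\bigr)}$.

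It remains to sum this over the $k!$ bijections, i.e.\ over orderings of the block sizes. Only the last factor depends on the ordering, so the task is the identity $\sum_{\text{orderings}}\prod_{s=1}^{k-1}(n_s-d)\big/\prod_{s=1}^{k-1}\bigl(L_s-(k-s+1)d\bigr)=1$, which I would prove by induction on $k$: peeling off $n_1$ factors out $1/(L_1-kd)=1/(n-kd)$, the inner sum over the remaining $k-1$ sizes is the same identity with $k-1$ in place of $k$ and equals $1$ by the inductive hypothesis, and $\sum_{c\in C}(|c|-d)=n-kd$ closes the induction (the case $k=1$ is trivial). Combining everything gives $\Pr(\C=C)=\dfrac{d^{|C|}}{(\alpha)_{(n)}}(\alpha/d)_{(|C|)}\prod_{c\in C}(1-d)_{(|c|-1)}$, as claimed.

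I expect the $k$-fold sum over table positions to be the main obstacle. When $d=0$ the Beta parameter $\alpha+jd$ is independent of the table index, each run of empty tables yields a genuine geometric series, and the computation telescopes transparently; the discount parameter breaks this, replacing the geometric series by hypergeometric-type ones, so one must first isolate the correct closed form for $\sum_{l\ge0}\Gamma(x+l)/\Gamma(y+l)$ and then check that iterating it is self-similar — that the shape of the partial result is preserved from one stage to the next — before the telescoping can be completed. The final ordering identity is a second, milder obstacle.
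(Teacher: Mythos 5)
Your proposal is correct and follows essentially the same route as the paper: compute the marginal law of $\z$ via Beta moments (Proposition~\ref{prop:A}), parameterize the allocation vectors inducing $C$ by the occupied-table positions together with a block-to-table bijection, collapse the sum over positions by an iterated self-similar identity (Lemma~\ref{lemma:D}), and finish with the ordering identity (Lemma~\ref{lemma:C}); your intermediate closed form for a fixed ordering matches the paper's exactly. The only differences are presentational: you phrase the gap-sum as a telescoping $\sum_l \Gamma(x+l)/\Gamma(y+l)$ identity where the paper uses $\E[X/(1-X)]$ for a Beta variable (the same computation), and you prove the ordering identity by induction where the paper uses an equivalent urn/sampling-without-replacement argument.
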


The proof of Theorem~\ref{theorem:main} follows the lines of \cite{miller2018elementary}'s derivation. We need the next two technical results, which we will prove in Section~\ref{section:proofs}. Let $C_z$ denote the partition $[n]$ induced by $z$ for any $z\in\N^n.$ Let ${k_n}$ be the number of parts in the partition. We define $m(z)= \text{ max }\{ z_{1},\ldots,z_{n} \}, \text{ and } g_j(z)=\#\{i:z_i\geq j\}.$

\begin{proposition}\label{prop:A}
  For any $z\in\N^n$, the marginal  distribution of the allocation variables vector $\z=(\z_1,\ldots,\z_n)$ is given by
$$\Pr(\z=z) = \frac{1}{(\alpha)_{(n)}}\prod_{c\in C_z}\frac{\Gamma(|c|+1-d)}{\Gamma(1-d)}\prod_{j=1}^{m(z)} \frac{\alpha+(j-1)d}{g_j(z)+\alpha+(j-1)d}.$$
\end{proposition}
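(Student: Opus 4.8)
The plan is to condition on the stick-breaking weights $\v$, expand the conditional allocation probability $\Pr(\z=z\mid\v)$ as a product of powers of the Beta variables $\v_i$ and $1-\v_i$, integrate $\v$ out factor by factor with the elementary Beta moment formula, and finally collapse the resulting product of Gamma functions by telescoping.

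First I would fix $z\in\N^n$ and write $n_j=\#\{i:z_i=j\}$ for the number of observations in component $j$. Conditionally on $\v$,
\begin{align*}
\Pr(\z=z\mid\v)=\prod_{i=1}^n\pi_{z_i}=\prod_{j\ge 1}\pi_j^{\,n_j}=\prod_{j\ge 1}\v_j^{\,n_j}\prod_{\ell=1}^{j-1}(1-\v_\ell)^{n_j}.
\end{align*}
Collecting, for each $\ell$, the exponent of $1-\v_\ell$ gives $\sum_{j>\ell}n_j=g_{\ell+1}(z)$; since $n_j=0$ for $j>m(z)$ and $g_{j+1}(z)=0$ for $j\ge m(z)$, the expression reduces to the finite product $\prod_{j=1}^{m(z)}\v_j^{\,n_j}(1-\v_j)^{g_{j+1}(z)}$, which is a bounded function of the independent variables $\v_1,\dots,\v_{m(z)}$. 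Taking expectations therefore factorizes: $\Pr(\z=z)=\prod_{j=1}^{m(z)}\E\big[\v_j^{\,n_j}(1-\v_j)^{g_{j+1}(z)}\big]$.

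Next I would evaluate each factor with the identity $\E[\v^p(1-\v)^q]=\Gamma(a+p)\Gamma(b+q)\Gamma(a+b)/\big(\Gamma(a)\Gamma(b)\Gamma(a+b+p+q)\big)$ for $\v\sim\Beta(a,b)$, here with $a=1-d$, $b=\alpha+jd$, $p=n_j$, $q=g_{j+1}(z)$. The useful simplifications are $n_j+g_{j+1}(z)=g_j(z)$ (so the last Gamma argument is $\alpha+(j-1)d+g_j(z)+1$) together with the boundary identities $g_1(z)=n$ and $g_{m(z)+1}(z)=0$. Applying $\Gamma(x+1)=x\Gamma(x)$ to $\Gamma(1-d+\alpha+jd)=\Gamma(\alpha+(j-1)d+1)$ and to $\Gamma(\alpha+(j-1)d+g_j(z)+1)$ pulls out precisely the rational factors $(\alpha+(j-1)d)/(g_j(z)+\alpha+(j-1)d)$. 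What remains splits into three recombining/telescoping pieces: $\prod_j\Gamma(1-d+n_j)/\Gamma(1-d)=\prod_{c\in C_z}\Gamma(|c|+1-d)/\Gamma(1-d)$ (the $n_j=0$ terms being trivial); $\prod_{j=1}^{m(z)}\Gamma(\alpha+(j-1)d)/\Gamma(\alpha+jd)=\Gamma(\alpha)/\Gamma(\alpha+m(z)d)$; and, after the index shift $j\mapsto j+1$ in the numerator, $\prod_{j=1}^{m(z)}\Gamma(\alpha+jd+g_{j+1}(z))/\Gamma(\alpha+(j-1)d+g_j(z))=\Gamma(\alpha+m(z)d)/\Gamma(\alpha+n)$, using $g_1(z)=n$ and $g_{m(z)+1}(z)=0$. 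Multiplying, the $\Gamma(\alpha+m(z)d)$ cancels, $\Gamma(\alpha)/\Gamma(\alpha+n)=1/(\alpha)_{(n)}$, and the leftover factors assemble into the formula of Proposition~\ref{prop:A}.

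The only real obstacle is the bookkeeping in this last step: choosing the right way to apply $\Gamma(x+1)=x\Gamma(x)$ so the rational factors appear cleanly, and tracking the index shifts so that the two Gamma products telescope — in particular matching $\Gamma(\alpha+jd+g_{j+1}(z))$ at level $j$ with the level-$(j+1)$ term and peeling off the boundary contributions at $j=1$ and $j=m(z)$. Once the exponents $n_j$ and $g_{j+1}(z)$ have been correctly identified in the conditioning step, everything else is routine manipulation of rising factorials.
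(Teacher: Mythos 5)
Your proposal is correct and follows essentially the same route as the paper's own proof: condition on the sticks, write $\Pr(\z=z\mid\v)=\prod_j \v_j^{e_j}(1-\v_j)^{g_{j+1}(z)}$, integrate each factor with the Beta moment identity, and telescope the Gamma ratios using $g_j=e_j+g_{j+1}$, $g_1=n$, $g_{m+1}=0$. The bookkeeping you describe (where to apply $\Gamma(x+1)=x\Gamma(x)$ and how the two products telescope) matches the paper's steps (b)--(d) exactly.
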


\begin{lemma}\label{lemma:B}
    For any partition $C$ of $[n]$,
    $$\sum_{z\in\N^n} \I(C_z=C) \prod_{j=1}^{m(z)} \frac{\alpha+(j-1)d}{g_j(z)+\alpha+(j-1)d}=\frac{d^{|C|}}{\prod_{c \in C}(|c|-d)}\Big(\frac{\alpha}{d}\Big)_{(|C|)}.$$
\end{lemma}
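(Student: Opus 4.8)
The plan is to evaluate the left-hand side head on, by re-indexing the sum over $z$ and then carrying out the $\N^n$-sum one block-label at a time, in the spirit of the Dirichlet ($d=0$) argument. Write $k=|C|$ and abbreviate $a_j=\alpha+(j-1)d$, so the summand is $\prod_{j=1}^{m(z)}a_j/(g_j(z)+a_j)$. A vector $z\in\N^n$ with $C_z=C$ is precisely an ordering $\sigma$ of the $k$ blocks of $C$ together with a strictly increasing tuple $1\le v_1<\cdots<v_k$ of labels, block $\sigma(r)$ receiving label $v_r$; for such a $z$ one has $m(z)=v_k$ and $g_j(z)=N_r^\sigma:=\sum_{s=r+1}^{k}|c_{\sigma(s)}|$ whenever $v_r<j\le v_{r+1}$ (put $v_0:=0$). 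Hence
\[
\sum_{z\in\N^n}\I(C_z=C)\prod_{j=1}^{m(z)}\frac{a_j}{g_j(z)+a_j}
=\sum_{\sigma}\ \sum_{1\le v_1<\cdots<v_k}\ \prod_{r=0}^{k-1}\ \prod_{j=v_r+1}^{v_{r+1}}\frac{a_j}{N_r^\sigma+a_j},
\]
and since every term is nonnegative we are free to perform the summations in any order.

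The workhorse is a one-variable summation: for integers $N>(t+1)d$ and $v,t\ge 0$,
\[
\sum_{m=v+1}^{\infty}a_{m+1}a_{m+2}\cdots a_{m+t}\ \prod_{j=v+1}^{m}\frac{a_j}{N+a_j}
=\frac{a_{v+1}a_{v+2}\cdots a_{v+t+1}}{N-(t+1)d}.
\]
As $(a_j)$ is arithmetic with step $d$, the shift $b_i=a_{v+i}$ turns $\prod_{j=v+1}^{m}a_j/(N+a_j)$ into the Pochhammer ratio $(b_1/d)_{(m-v)}/((N+b_1)/d)_{(m-v)}$ and $a_{m+1}\cdots a_{m+t}$ into $d^{t}(b_1/d+m-v)_{(t)}$; absorbing the latter via $(A+\ell)_{(t)}(A)_{(\ell)}=(A)_{(\ell+t)}=(A)_{(t)}(A+t)_{(\ell)}$ leaves the Gauss-summable series $\sum_{\ell\geq 0}(A')_{(\ell)}/(B')_{(\ell)}=(B'-1)/(B'-A'-1)$, and simplifying yields the stated value. (Staying elementary, the $t=0$ case also follows from the backward recursion $S_v=\tfrac{a_{v+1}}{N+a_{v+1}}(1+S_{v+1})$, and the general $t$ from the analogous one.) Now eliminate $v_k,v_{k-1},\dots,v_1$ successively: when $v_{r+1}$ is summed it carries a trailing weight $a_{v_{r+1}+1}\cdots a_{v_{r+1}+(k-1-r)}$ inherited from the previous stage, so the identity above with $t=k-1-r$ and $N=N_r^\sigma$ contributes the factor $\bigl(N_r^\sigma-(k-r)d\bigr)^{-1}=\bigl(\sum_{s=r+1}^{k}(|c_{\sigma(s)}|-d)\bigr)^{-1}$ while handing on the one-longer weight $a_{v_r+1}\cdots a_{v_r+(k-r)}$; the hypothesis $N>(t+1)d$ holds because $d<1$ forces $N_r^\sigma\ge k-r>(k-r)d$. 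After $v_1$ only the constant $a_1\cdots a_k$ survives, so the double sum equals
\[
a_1a_2\cdots a_k\ \sum_{\sigma\in S_k}\ \prod_{r=0}^{k-1}\frac{1}{\sum_{s=r+1}^{k}(|c_{\sigma(s)}|-d)}.
\]

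It remains to compute the permutation sum. With $x_i=|c_i|-d>0$ this is the classical identity $\sum_{\sigma\in S_k}\prod_{r=0}^{k-1}(x_{\sigma(r+1)}+\cdots+x_{\sigma(k)})^{-1}=(x_1\cdots x_k)^{-1}$, proved by induction on $k$: the $r=0$ factor is $1/X$ with $X=\sum_i x_i$ independent of $\sigma$, and conditioning on $\sigma(1)=j$ the rest sums, by the inductive hypothesis applied to $\{x_i:i\ne j\}$, to $(X-x_j)/\prod_{i\ne j}x_i$; since $\sum_j\tfrac{1}{X-x_j}\cdot\tfrac{X-x_j}{\prod_{i\ne j}x_i}=\sum_j\tfrac{1}{\prod_{i\ne j}x_i}=\tfrac{\sum_j x_j}{\prod_i x_i}=\tfrac{X}{\prod_i x_i}$, the $1/X$ cancels the $X$. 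Using $a_1a_2\cdots a_k=\prod_{r=1}^{k}(\alpha+(r-1)d)=d^{k}(\alpha/d)_{(k)}$ then gives exactly $\dfrac{d^{|C|}}{\prod_{c\in C}(|c|-d)}\bigl(\tfrac{\alpha}{d}\bigr)_{(|C|)}$.

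The main obstacle is the bookkeeping of the iteration: one must check that the trailing weight $a_{v+1}\cdots a_{v+t}$ lengthens by exactly one at each of the $k$ stages --- so that $t=k-1-r$ when $v_{r+1}$ is removed --- and that the denominators produced assemble into the tail sums of the numbers $|c|-d$, all while the convergence condition $N>(t+1)d$ stays satisfied. The one-variable summation and the symmetric-function identity are each short in isolation; the delicate part is making their indices mesh through the full recursion (one could instead fold both into a single induction on the number of blocks of $C$).
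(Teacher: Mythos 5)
Your proof is correct and follows essentially the same route as the paper: the same decomposition of $\{z:C_z=C\}$ into a block ordering $\sigma$ and increasing labels $v_1<\cdots<v_k$ (the paper's pair $(\sigma,b)$ with $v_i=\bar b_i$), the same successive elimination of the labels via the Beta-moment/Gauss summation of Pochhammer ratios (the paper's Lemmas~\ref{lemma:D} and~\ref{lemma:E}), and the same terminal permutation identity (the paper's Lemma~\ref{lemma:C}). The only genuine difference is that you prove that last symmetric-function identity by induction on the number of blocks, whereas the paper derives it from a sampling-without-replacement urn argument.
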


\begin{proof}[Proof of Theorem \ref{theorem:main}]
\begin{equation*}
    \begin{split}
        \Pr(\C=C)&=\sum_{z \in \N^{n}}\Pr(\C=C | \z=z) \Pr(\z=z)\\
&\overset{\text{(a)}}{=}\sum_{z\in\N^n}\I(C_z = C)\frac{1}{(\alpha)_{(n)}}\prod_{c \in C_{z}} \frac{\Gamma (| c | + 1 - d)}{ \Gamma (1-d)} \prod_{j=1}^{m(z)} \frac{\alpha + (j-1)d}{g_{j}(z)+\alpha+(j-1)d}\\
&= \frac{1}{(\alpha)_{(n)}}\prod_{c \in C} \frac{\Gamma (| c | + 1 - d)}{ \Gamma (1-d)} \sum_{z\in\N^n}\I(C_z = C) \prod_{j=1}^{m(z)} \frac{\alpha + (j-1)d}{g_{j}(z)+\alpha+(j-1)d}\\
&\overset{\text{(b)}}{=} \frac{1}{(\alpha)_{(n)}}\prod_{c \in C} \frac{\Gamma (| c | + 1 - d)}{ \Gamma (1-d)} \frac{d^{|C|}}{\prod_{c \in C}(|c|-d)}\Big(\frac{\alpha}{d}\Big)_{(|C|)}\\
&\overset{\text{(c)}}{=} \frac{1}{(\alpha)_{(n)}} \prod_{c \in C}(1-d)_{(| c | - 1)} \prod_{c \in C} ( | c | - d )  \frac{d^{|C|}}{\prod_{c \in C}(|c|-d)}\Big(\frac{\alpha}{d}\Big)_{(|C|)}\\
&=\frac{d^{|C|}}{(\alpha)_{(n)}}  \Big( \frac{\alpha}{d}\Big)_{(|C|)} \prod_{c \in C} (1-d)_{(|c| - 1)},
    \end{split}
\end{equation*}
where (a) is by Proposition~\ref{prop:A}, (b) is by Lemma \ref{lemma:B}, and (c) is since $\Gamma(|c|+1-d) =(|c |-d)\Gamma(|c-d|)$.
\end{proof}

\section{Proofs of the technical results}
\label{section:proofs}

\subsection{Additional lemmas}

We require the following additional lemmas.

\begin{lemma}\label{lemma:E}
For $a+c > 0, \text{ and } b + d>0$, if $\y \sim \mathrm{Beta}(a,b),$ then $\E [\y^{c}(1-\y)^{d}]=\frac{B(a+c, b+d)}{B(a,b)}$ where $B$ denotes the beta function.
\end{lemma}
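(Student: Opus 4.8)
The plan is to proceed by direct integration against the Beta density. Recall that for $a,b>0$ the $\mathrm{Beta}(a,b)$ distribution has density $f(y) = \frac{1}{B(a,b)} y^{a-1}(1-y)^{b-1}$ on $(0,1)$, where $B(a,b) = \int_0^1 y^{a-1}(1-y)^{b-1}\,\ddr y$. So I would write
\begin{align*}
\E\big[\y^{c}(1-\y)^{d}\big] = \int_0^1 y^{c}(1-y)^{d}\, f(y)\,\ddr y = \frac{1}{B(a,b)}\int_0^1 y^{a+c-1}(1-y)^{b+d-1}\,\ddr y.
\end{align*}

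Next I would observe that the hypotheses $a+c>0$ and $b+d>0$ are exactly what is needed for the last integral to converge: near $y=0$ the integrand behaves like $y^{a+c-1}$, which is integrable iff $a+c>0$, and near $y=1$ it behaves like $(1-y)^{b+d-1}$, which is integrable iff $b+d>0$. Under these conditions the integral is, by definition, $B(a+c,\,b+d)$, so the expectation equals $B(a+c,b+d)/B(a,b)$, as claimed.

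There is essentially no obstacle here: the only point requiring a word of care is that $c$ and $d$ are allowed to be negative (only their sums with $a$ and $b$ must be positive), so one should not invoke the Gamma-function identity $B(a,b)=\Gamma(a)\Gamma(b)/\Gamma(a+b)$ without noting it still applies, or — more cleanly — simply keep everything in terms of the integral definition of $B$ throughout, which sidesteps any issue with poles of $\Gamma$.
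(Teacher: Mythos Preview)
Your proof is correct and follows essentially the same route as the paper: write the expectation as an integral against the $\mathrm{Beta}(a,b)$ density, combine exponents, and recognize the resulting integral as $B(a+c,b+d)$. Your additional remarks on the integrability conditions and on avoiding the Gamma-function form are sound but go slightly beyond what the paper records.
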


\begin{proof}
\begin{align*}
\E [\y^{c}(1-\y)^{d}] &= \int_0^1 y^{c}(1-y)^{d} \frac{1}{B(a,b)}y^{a-1}(1-y)^{b-1} \mathrm{d}y\\
&=\frac{1}{B(a,b)}\int_0^1 y^{a+c-1}(1-y)^{b+d-1} \mathrm{d}y\\
&= \frac{B(a+c,b+d)}{B(a,b)}.
\end{align*}
\end{proof}

Let $S_{k_n}$ denote the set of ${k_n}!$ permutations of $[{k_n}]$. The following lemma is key for proving Lemma~\ref{lemma:B}.

\begin{lemma}\label{lemma:C}
For any $n_1,\ldots,n_{k_n}\in\N$,
$$\sum_{\sigma \in S_{{k_n}}} \prod_{i=1}^{{k_n}} \frac{1}{a_{i}(\sigma)-({k_n}-i+1)d} = \frac{1}{\prod_{i=1}^{{k_n}} (  n_{i}  - d) } $$
where $a_i(\sigma) = n_{\sigma_i} + n_{\sigma_{i +1}} +\cdots + n_{\sigma_{k_n}}$.
\end{lemma}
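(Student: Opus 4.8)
The plan is to first strip away the cosmetic complexity of the summand. Since $a_i(\sigma)=n_{\sigma_i}+\cdots+n_{\sigma_{k_n}}$ is a sum of exactly $k_n-i+1$ terms, one has $a_i(\sigma)-(k_n-i+1)d=\sum_{m=i}^{k_n}(n_{\sigma_m}-d)$. Setting $b_j:=n_j-d$, which is strictly positive because $n_j\ge 1>d$, the claim becomes the fully symmetric identity
\[
  \sum_{\sigma \in S_{k_n}} \prod_{i=1}^{k_n} \frac{1}{b_{\sigma_i} + b_{\sigma_{i+1}} + \cdots + b_{\sigma_{k_n}}} = \frac{1}{b_1 b_2 \cdots b_{k_n}},
\]
valid for arbitrary positive reals $b_1,\ldots,b_{k_n}$. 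In this form every denominator is a sum of positive numbers, so no term is ever ill-defined.

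The main step is an induction on $k_n$, the case $k_n=1$ being trivial. For the inductive step, observe that the $i=1$ factor equals $1/(b_1+\cdots+b_{k_n})$ for \emph{every} $\sigma$ and therefore factors out of the sum. Splitting the remaining sum according to the value $\ell=\sigma_1$, the tail $(\sigma_2,\ldots,\sigma_{k_n})$ ranges over all permutations of $[k_n]\setminus\{\ell\}$; after relabelling $i\mapsto i-1$, the leftover product is exactly the left-hand side of the reduced identity for the $k_n-1$ numbers $\{b_j:j\neq\ell\}$ (the coefficient $(k_n-i+1)d$ at position $i$ becomes $((k_n-1)-i'+1)d$ at position $i'=i-1$). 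Applying the inductive hypothesis to each such subfamily gives
\[
  \text{LHS} = \frac{1}{\sum_{j=1}^{k_n} b_j}\ \sum_{\ell=1}^{k_n} \prod_{j \neq \ell} \frac{1}{b_j}.
\]

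It then remains to check the elementary identity $\sum_{\ell=1}^{k_n}\prod_{j\neq\ell}b_j^{-1}=\big(\sum_{j=1}^{k_n}b_j\big)\big/\prod_{j=1}^{k_n}b_j$, which is immediate on clearing denominators. Substituting it back collapses the right-hand side to $1/(b_1\cdots b_{k_n})$, and undoing $b_j=n_j-d$ recovers the lemma. The only place needing care is the bookkeeping in the relabelling step — matching the tail sums and the shifted $d$-coefficients of the smaller instance to the inductive hypothesis — but this is routine rather than a real obstacle. As an alternative, one can argue probabilistically: if $\bm X_1,\ldots,\bm X_{k_n}$ are independent with $\bm X_j\sim\Exp(b_j)$, repeated use of the memoryless property gives $\Pr(\bm X_{\sigma_1}<\cdots<\bm X_{\sigma_{k_n}})=\prod_{i=1}^{k_n}b_{\sigma_i}/(b_{\sigma_i}+\cdots+b_{\sigma_{k_n}})$; summing over $\sigma\in S_{k_n}$ yields $1$, and dividing through by $b_1\cdots b_{k_n}$ (using $\prod_i b_{\sigma_i}=\prod_j b_j$) gives the reduced identity.
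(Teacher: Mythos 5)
Your proof is correct, and its main line of argument is genuinely different from the paper's. You first observe that $a_i(\sigma)-(k_n-i+1)d=\sum_{m=i}^{k_n}(n_{\sigma_m}-d)$, substitute $b_j=n_j-d>0$, and reduce the claim to the symmetric identity
\[
\sum_{\sigma\in S_{k_n}}\prod_{i=1}^{k_n}\frac{1}{b_{\sigma_i}+\cdots+b_{\sigma_{k_n}}}=\frac{1}{b_1\cdots b_{k_n}},
\]
which you then prove by induction on $k_n$, peeling off the permutation-independent first factor $1/(b_1+\cdots+b_{k_n})$, conditioning on $\sigma_1=\ell$, and finishing with $\sum_{\ell}\prod_{j\neq\ell}b_j^{-1}=(\sum_j b_j)/\prod_j b_j$; all of these steps check out, including the index bookkeeping. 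The paper instead proves the identity in one stroke by a normalization argument: it defines a size-biased sampling-without-replacement scheme on an urn with ball sizes $n_i-d$, recognizes $\prod_i\frac{n_{\sigma_i}-d}{a_i(\sigma)-(k_n-i+1)d}$ as the probability of drawing the balls in order $\sigma$, sums to $1$ over $S_{k_n}$, and divides by $\prod_i(n_i-d)$. Your ``alternative'' proof via independent $\Exp(b_j)$ races is essentially this same argument in different clothing, since the arrival order of the exponentials is exactly the size-biased permutation of the urn. The trade-off: your induction is entirely algebraic and self-contained (no probabilistic construction needed, and it makes the telescoping mechanism explicit), while the paper's urn argument is shorter once the probabilistic interpretation is seen and explains conceptually why the sum collapses.
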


\begin{proof}
Consider the process of sampling without replacement ${k_n}$ times from an urn containing ${k_n}$ balls. The balls have sizes $n_1-d,\ldots,n_{k_n}-d,$ and the probability of drawing ball $i$ is proportional to its size $n_i-d.$ Thus for any permutation $\sigma\in S_{k_n}$ we have that

\begin{align*}
& p(\sigma_1) = \frac{n_{\sigma_{1}}-d}{n-td} =\frac{n_{\sigma_1}-d}{a_1(\sigma)-td},\\
& p(\sigma_2|\sigma_1) = \frac{n_{\sigma_2}-d}{n-n_{\sigma_{1}}-({k_n}-1)d} =\frac{n_{\sigma_2}-d}{a_2(\sigma)-({k_n}-1)d},\\
& p(\sigma_i|\sigma_1,\dots,\sigma_{i-1}) =\frac{n_{\sigma_i}-d}{n-n_{\sigma_{1}}-\cdots - n_{\sigma_{i-1}}-({k_n}-i+1)d} =\frac{n_{\sigma_i}-d}{a_i(\sigma)-({k_n}-i+1)d}.
\end{align*}
Therefore,
\begin{align}\label{equation:sigma}
p(\sigma) = p(\sigma_1) p(\sigma_2|\sigma_1)\cdots p(\sigma_{k_n}|\sigma_1,\ldots,\sigma_{{k_n} -1})
= \prod_{i=1}^{{k_n}}\frac{n_{\sigma_i} -d}{a_{i}(\sigma)-({k_n}-i+1)d}.
\end{align}
This way, we construct a distribution on $S_{k_n}.$ We know that $\sum_{\sigma\in S_{k_n}} p(\sigma) = 1.$ Applying this to Equation \eqref{equation:sigma} and dividing both sides by 
 $(n_{\sigma_1}-d)\cdots (n_{\sigma_{k_n}}-d) = (n_1-d)\cdots (n_{k_n}-d)$ gives the result.
\end{proof}

\begin{lemma}\label{lemma:D}

Let $b_i \in \N \text{ for } i \in \{1, \ldots, {k_n} \}$ and let $b_0 =0.$ We define $\bar{b}_{i} = b_0 + b_1 + \cdots + b_i.$
Then
\begin{equation*}
\prod_{i=1}^{{k_n}} \sum_{b_{i} \in \N} \frac{ (\frac{ \alpha }{ d } + \bar{b}_{i-1} )_{(b_{i})} }{ (\frac{a_{i}+\alpha}{d}+ \bar{b}_{i-1} )_{(b_{i})}} = \frac{(\frac{\alpha}{d})_{({k_n})}}{\prod_{i=1}^{{k_n}}(\frac{a_i}{d}-({k_n}+1-i))}.   
\end{equation*}
\end{lemma}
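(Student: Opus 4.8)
The plan is to collapse the $k_n$-fold iterated sum by repeated use of one elementary one-variable identity: for real $x$ and any $t>1$,
\begin{equation*}
\sum_{b=1}^{\infty}\frac{(x)_{(b)}}{(x+t)_{(b)}}=\frac{x}{t-1}.
\end{equation*}
This is the classical Gauss evaluation of ${}_2F_1(1,x;x+t;1)$, but it is cleanest here to reprove it by telescoping: setting $g(b)=\dfrac{(x)_{(b)}\,(x+t+b-1)}{(t-1)\,(x+t)_{(b)}}$, one checks purely algebraically that $g(b)-g(b+1)=(x)_{(b)}/(x+t)_{(b)}$, while $g(1)=x/(t-1)$ and $g(b)\to0$ as $b\to\infty$ exactly because $t>1$. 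Throughout, the left-hand side of Lemma~\ref{lemma:D} is read as the iterated sum $\sum_{b_1\in\N}\cdots\sum_{b_{k_n}\in\N}$ of the product, evaluated from the innermost variable outward; this is unambiguous because, with $d\in(0,1)$ and $\alpha>-d$, every factor is positive except possibly the numerator $(\alpha/d)_{(b_1)}$ of the $i=1$ term, whose sign is in any case independent of $b_1$, so the summands are all of one sign and Tonelli applies.

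Writing $c=\alpha/d$, I would prove by downward induction on $j=k_n,k_n-1,\ldots,0$ that once the sums over $b_{k_n},\ldots,b_{j+1}$ have been carried out, what remains equals
\begin{equation*}
\sum_{b_1,\ldots,b_j\in\N}\;\frac{(c+\bar{b}_{j})_{(k_n-j)}}{\prod_{i=j+1}^{k_n}\bigl(\tfrac{a_i}{d}-(k_n+1-i)\bigr)}\;\prod_{i=1}^{j}\frac{(c+\bar{b}_{i-1})_{(b_i)}}{\bigl(\tfrac{a_i+\alpha}{d}+\bar{b}_{i-1}\bigr)_{(b_i)}}.
\end{equation*}
For $j=k_n$ this is the original left-hand side (the two extra pieces being empty products), and for $j=0$ it is, since $\bar{b}_0=0$, the right-hand side $\frac{(\alpha/d)_{(k_n)}}{\prod_{i=1}^{k_n}(\tfrac{a_i}{d}-(k_n+1-i))}$ claimed in the lemma. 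The inductive step carries out the sum over $b_j$ using only the rising-factorial identities $(a)_{(m)}(a+m)_{(p)}=(a)_{(m+p)}=(a)_{(p)}(a+p)_{(m)}$: merging $(c+\bar{b}_{j-1})_{(b_j)}$ with $(c+\bar{b}_{j})_{(k_n-j)}=(c+\bar{b}_{j-1}+b_j)_{(k_n-j)}$ rewrites the $b_j$-dependent numerator as $(c+\bar{b}_{j-1})_{(k_n-j)}\,(c+\bar{b}_{j-1}+k_n-j)_{(b_j)}$; factoring out the $b_j$-free $(c+\bar{b}_{j-1})_{(k_n-j)}$ and applying the one-variable identity with $x=c+\bar{b}_{j-1}+k_n-j$ and $t=\tfrac{a_j}{d}-(k_n-j)$ — so that $x+t=\tfrac{a_j+\alpha}{d}+\bar{b}_{j-1}$ and $t-1=\tfrac{a_j}{d}-(k_n+1-j)$ — collapses the $b_j$-sum to $\frac{c+\bar{b}_{j-1}+k_n-j}{\frac{a_j}{d}-(k_n+1-j)}$; reassembling via $(a)_{(p)}(a+p)=(a)_{(p+1)}$ then produces exactly the expression at level $j-1$.

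The only real obstacle is the first step — isolating the one-variable summation identity together with its range of validity; everything afterwards is bookkeeping driven by the associativity-type identities for rising factorials. It is worth making explicit that the $a_i$ occurring here are precisely those produced by Lemma~\ref{lemma:C}, so each $a_i$ is a sum of $k_n+1-i$ positive integers and hence $a_i\ge k_n+1-i>(k_n+1-i)d$; this yields $t-1=\tfrac{a_i}{d}-(k_n+1-i)>0$ at every step, which is exactly the hypothesis $t>1$ needed for the one-variable identity and also what makes the right-hand side finite, so all the steps are legitimate under the conditions in force.
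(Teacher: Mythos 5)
Your proof is correct, and it follows the same downward-induction skeleton as the paper's: the same intermediate quantity (the paper's $A_j$), the same rising-factorial rearrangement $(a)_{(m)}(a+m)_{(p)}=(a)_{(m+p)}=(a)_{(p)}(a+p)_{(m)}$ in the inductive step, and the same reduction to a single one-variable sum at each stage. Where you genuinely differ is in how that one-variable sum $\sum_{b\ge 1}(x)_{(b)}/(x+t)_{(b)}=x/(t-1)$ is evaluated: the paper writes each ratio of rising factorials as $\E[X^{b}]$ for $X\sim\Beta(x,t)$ via Lemma~\ref{lemma:E}, interchanges the sum and the expectation, and computes $\E[X/(1-X)]$ as a Beta integral; you instead verify the closed form by exhibiting an explicit telescoping antidifference $g(b)$. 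Your computation checks out ($g(b)-g(b+1)=(x)_{(b)}/(x+t)_{(b)}$, $g(1)=x/(t-1)$, and $g(b)=O(b^{1-t})\to 0$), and your route is more elementary in that it needs no integration and no justification for swapping an infinite sum with an expectation. It also has the virtue of making the convergence condition $t>1$ explicit --- a condition the paper leaves implicit in its use of $\E[X/(1-X)]$, which is finite only when the second Beta parameter exceeds one; your closing observation that the $a_i$ coming from Lemma~\ref{lemma:C} satisfy $a_i\ge k_n+1-i$, hence $a_i/d-(k_n-i)>1$, supplies exactly this justification for every step of the induction, including the base case. The two proofs are of comparable length; yours is self-contained, while the paper's reuses Lemma~\ref{lemma:E}, which it needs anyway for Proposition~\ref{prop:A}.
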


\begin{proof}
Let $A_j$ denote the intermediate sum $A_j = \prod_{i=j}^{{k_n}} \sum_{b_{i} \in \mathbb{N}} \frac{ (\frac{ \alpha }{ d } + \bar{b}_{i-1} )_{(b_{i})} }{ (\frac{a_{i}+\alpha}{d}+ \bar{b}_{i-1} )_{(b_{i})}}$. 
We show by induction decreasing from $j={k_n}$ to $j = 0$ that
\begin{align}
\label{ref9}
A_j =  \frac{(\frac{\alpha}{d} +  \bar{b}_{j-1})_{({k_n}-j+1)}}{\prod_{i=j}^{{k_n}}(\frac{a_{i}}{d}-({k_n}+1-i))}.
\end{align}

When $j={k_n}$ we have 
\begin{align*}
A_{k_n} = \sum_{b_{{k_n}} \in \mathbb{N}} \frac{ (\frac{ \alpha }{ d } + \bar{b}_{{k_n}-1} )_{(b_{{k_n}})} }{ (\frac{a_{{k_n}}+\alpha}{d}+ \bar{b}_{{k_n}-1} )_{(b_{{k_n}})}} = \sum_{b_{{k_n}} \in \mathbb{N}} \E [X^{b_{{k_n}}}]
\end{align*}
where $X \sim \text{Beta}(\frac{\alpha}{d}+ \bar{b}_{{k_n}-1}, \frac{a_{{k_n}}}{d})$. We have that
\begin{align*}
\sum_{b_{{k_n}} \in \mathbb{N}} \E [X^{b_{{k_n}}}] &=  \E \Big[ \sum_{b_{{k_n}} \in \mathbb{N}} X^{b_{{k_n}}}\Big]= \mathbb{E}\left[\frac{X}{1-X} \right] = \frac{\alpha + d \bar{b}_{{k_n}-1}}{a_{{k_n}}-d},
\end{align*}
due to Lemma \ref{lemma:E}, which proves the initialization for~\eqref{ref9}.

We now consider the case of an arbitrary $j$, greater than $0$ and less than ${k_n}$. By the induction hypothesis, we have that Equation~\eqref{ref9} holds for $j+1$, that is
\begin{align*}
A_{j+1} =   \frac{ ( \frac{\alpha}{d} +  \bar{b}_{j})_{({k_n}-j)}} {\prod_{i=j+1}^{{k_n}} (\frac{a_{i}}{d} - ({k_n}+1-i))}.
\end{align*}
Therefore,
\begin{align*}
A_j
&= \sum_{b_{j} \in \mathbb{N} } \frac{ (\frac{ \alpha }{ d } + \bar{b}_{j-1} )_{(b_{j})} }{ (\frac{a_{j}+\alpha}{d}+ \bar{b}_{j-1} )_{(b_{j})}} \prod_{i=j+1}^{{k_n}} \sum_{b_{i} \in \mathbb{N}} \frac{ (\frac{ \alpha }{ d } + \bar{b}_{i-1} )_{(b_{i})} }{ (\frac{a_{i}+\alpha}{d}+ \bar{b}_{i-1} )_{(b_{i})}} \\
&= \sum_{b_{j} \in \mathbb{N} } \frac{(\frac{\alpha}{d}+\bar{b}_{j-1})_{(b_{j})}}{(\frac{a_{j}}{d}+\frac{\alpha}{d}+ \bar{b}_{j-1})_{(b_{j})}}  \frac{ (\frac{\alpha}{d} +  \bar{b}_{j})_{({k_n}-j)}} {\prod_{i=j+1}^{{k_n}} (\frac{a_i}{d} - ({k_n}+1-i))}
\end{align*}

Rearranging the rising factorials in the numerator, we can write
\begin{align*}
    \Big( \frac{\alpha}{d}+\bar{b}_{j-1}\Big)_{(b_{j})}\Big(\frac{\alpha}{d} +  \bar{b}_{j}\Big)_{({k_n}-j)} 
    &= \Big(\frac{\alpha}{d}+\bar{b}_{j-1}\Big)_{(b_{j})}\Big(\frac{\alpha}{d} +  \bar{b}_{j-1}+b_j \Big)_{({k_n}-j)}\\
    &= \Big(\frac{\alpha}{d}+\bar{b}_{j-1}\Big)_{(b_{j}+{k_n}-j)}\\
    &= \Big(\frac{\alpha}{d}+\bar{b}_{j-1}\Big)_{({k_n}-j)}\Big(\frac{\alpha}{d} +  \bar{b}_{j-1}+{k_n}-j\Big)_{(b_j)} 
\end{align*}
and thus factorize the terms independent of $b_j$ in order to obtain
\begin{align*}
A_j
&=  \frac{ (\frac{\alpha}{d}+\bar{b}_{j-1})_{({k_n}-j)}} {\prod_{i=j+1}^{{k_n}} (\frac{a_i}{d} - ({k_n}+1-i))}
\sum_{b_{j} \in \mathbb{N}}
\frac{(\frac{\alpha}{d}+\bar{b}_{j-1}+{k_n}-j)_{(b_{j})}}{(\frac{a_{j}}{d}+\frac{\alpha}{d}+\bar{b}_{j-1})_{(b_{j})}}.
\end{align*}
The sum above can be rewritten, using $X \sim \text{Beta}(\frac{\alpha}{d}+\bar{b}_{j-1}+({k_n}-j),\frac{a_{j}}{d}-({k_n}-j))$, as
\begin{align*}
\sum_{b_{j} \in \mathbb{N}} \mathbb{E} [X^{b_{j}}] = \mathbb{E} \left[\frac{X}{1-X}\right]
=\frac{ \frac{\alpha}{d} + \bar{b}_{j-1} + ({k_n}-j) }{ \frac{a_{j}}{d} - ({k_n}+1-j) }.
\end{align*}
Putting this all together,
\begin{align*}
A_j
&= \frac{ (\frac{\alpha}{d} + \bar{b}_{j-1} + ({k_n}-j)) }{ \frac{a_{j}}{d} - ({k_n}+1-j) }
\frac{ (\frac{\alpha}{d}+\bar{b}_{j-1})_{({k_n}-j)}} {\prod_{i=j+1}^{{k_n}} (\frac{a_i}{d} - ({k_n}+1-i))}\\
&= \frac{(\frac{\alpha}{d} +  \bar{b}_{j-1})_{({k_n}-j+1)}}{\prod_{i=j}^{{k_n}}(\frac{a_i}{d}-({k_n}+1-i))}
\end{align*}
which proves the desired result for $j$.
By induction, this result is true for all $j \in \{ 1,\dots,{k_n} \}.$ Letting $j=1$ gives the result stated in the lemma, since $\bar{b}_{0}=b_0=0$.
\end{proof}

\subsection{Proof of Proposition \ref{prop:A} and Lemma \ref{lemma:B}}

\begin{proof}[Proof of Proposition \ref{prop:A}]
For simplicity, we fix the allocation variable vector to a value $z$ and denote $m(z)$ by $m$ and $g_{j}(z)$ by $g_{j}.$ We have
$$\Pr(\boldsymbol{z} = z | \pi_1,\ldots,\pi_m) =\prod_{i = 1}^n\pi_{z_i} =\prod_{j = 1}^m \pi_j^{e_j} $$
where $e_j =\#\{i:z_i = j\}$.
Thus,
$$\Pr(\boldsymbol{z} = z | v_1,\ldots,v_m) =\prod_{j = 1}^m\Big(v_j\textstyle\prod_{i = 1}^{j -1} (1 - v_i)\Big)^{e_j}
=\displaystyle\prod_{j = 1}^m v_j^{e_j} (1 - v_j)^{f_j} $$
where $f_j =\#\{i:z_i>j\}$. Therefore,

\begin{align*}
\Pr(\z = z) & =\int \Pr(\z = z | v_1,\ldots,v_m) p(v_1,\ldots,v_m) \ddr v_1\cdots \ddr v_m\\
& =\int \Big(\prod_{j = 1}^m v_j^{e_j} (1 - v_j)^{f_j}\Big) p_{1}(v_1)\cdots p_{m}(v_m) \ddr v_1\cdots \ddr v_m\\
& =\prod_{j = 1}^m \int v_j^{e_j} (1 - v_j)^{f_j} p_{j}(v_j) \ddr v_j\\
& \overset{\text{(a)}}{=}\prod_{j = 1}^m \frac{B(e_{j}+1-d,f_{j}+\alpha+jd)}{B(1-d,\alpha+jd)}\\
&= \prod_{j=1}^{m}\frac{\Gamma(e_{j}+1-d)\Gamma(f_{j}+\alpha+jd) \Gamma(\alpha + (j-1)d)+1}{\Gamma(e_{j}+f_{j}+\alpha+(j-1)d)+1)\Gamma(1-d)\Gamma(\alpha + jd)}\\
& \overset{\text{(b)}}{=} \prod_{j=1}^{m} \frac{\Gamma(e_{j}+1-d)}{\Gamma(1-d)} \prod_{j=1}^{m} \frac{\Gamma(g_{j+1}+\alpha+jd)}{\Gamma(g_{j}+\alpha+(j-1)d+1)} \prod_{j=1}^{m} \frac{\Gamma(\alpha+(j-1)d+1)}{\Gamma(\alpha+jd)}\\
& \overset{\text{(c)}}{=} \prod_{j=1}^{m} \frac{\Gamma(e_{j}+1-d)}{\Gamma(1-d)} \prod_{j=1}^{m} \frac{\alpha + (j-1)d}{g_{j}+\alpha+(j-1)d} \frac{\Gamma (g_{m+1}+\alpha+md) \Gamma(\alpha)}{\Gamma(g_1+\alpha) \Gamma(\alpha + md)}\\
& \overset{\text{(d)}}{=}\frac{\Gamma (\alpha)}{\Gamma ( n + \alpha)} \prod_{c \in C_{z}} \frac{\Gamma (| c | + 1 - d)}{ \Gamma (1-d)} \prod_{j=1}^{m} \frac{\alpha + (j-1)d}{g_{j}+\alpha+(j-1)d}
\end{align*}
where step (a) follows from Lemma \ref{lemma:E}, step (b) since $f_j = g_{j +1}$ and $g_j = e_j + f_j$, step (c) since $\Gamma(x +1) = x\Gamma(x)$, and step (d) since $g_{1}=n$ and $g_{m+1}=0.$
\end{proof}

\begin{proof}[Proof of Lemma \ref{lemma:B}]
As before, we denote the parts of $C$ by $c_1,\dots,c_{k_n},$ and we let ${k_n}=|C|.$ We denote the distinct values taken on by $z_1,\dots,z_n$ by $j_1 < \dots < j_{k_n}.$ We define $j_{0}=b_{0}=0,$ $b_{i} = j_{i}-j_{i-1},$ and $\bar{b}_{i}=b_{0}+\cdots+b_{i}$ for $i \in \{ 1,\ldots,{k_n} \}$. We use the notation $a_{i}(\sigma)=n_{\sigma_{i}}+\cdots+n_{\sigma_{{k_n}}},$ where $\sigma$ is the permutation of $[{k_n}]$ such that $c_{\sigma_{i}}=\{ \ell : z_\ell = j_i \}.$ Then for any $z \in \mathbb{N}^{n}$ such that $C_{z}=C,$

\begin{align*}
\prod_{j=1}^{m(z)} \frac{\alpha+(j-1)d}{g_{j}(z)+\alpha+(j-1)d} &= \prod_{j=1}^{m(z)} \frac{\frac{\alpha}{d}+j-1}{\frac{g_{j}(z)+\alpha}{d}+j-1}\\
=\prod_{i=1}^{{k_n}} \prod_{j=\bar{b}_{i-1}+1}^{\bar{b}_{i}} \frac{ \frac{\alpha}{d}+j-1 }{\frac{g_{j}(z)+\alpha}{d}+j-1} &=\prod_{i=1}^{{k_n}}\frac{(\frac{\alpha}{d}+\bar{b}_{i-1})_{(b_{i})}}{(\frac{\alpha + a_{i}(\sigma)}{d}+\bar{b}_{i-1})_{(b_{i})}},
\end{align*}
because $g_{j}(z)=a_{i}(\sigma)$ for $\bar{b}_{i-1} < j \leq \bar{b}_{i}.$ It follows from the definition of $b=(b_{1},...,b_{{k_n}})$ and $\sigma$ that there is a one-to-one correspondence between $ \{ z \in \mathbb{N}^{n}:C_{z} = C \}$ and $\{ (\sigma,b): \sigma \in S_{{k_n}}, b \in \mathbb{N}^{{k_n}} \}.$ Therefore,
\begin{align*}
\sum_{z \in \mathbb{N}^{n}} \I(C_{z}=C) &\prod_{j=1}^{m(z)} \frac{\alpha + (j-1)d}{g_{j}(z)+ \alpha + (j-1)d} = \sum_{\sigma \in S_{{k_n}}} \sum_{b \in \mathbb{N}^{{k_n}}} \prod_{i=1}^{{k_n}} \frac{ (\frac{ \alpha }{ d } + \bar{b}_{i-1} )_{(b_{i})} }{ (\frac{a_{i}(\sigma)+\alpha}{d}+ \bar{b}_{i-1} )_{(b_{i})}}\\
 &= \sum_{\sigma \in S_{{k_n}}} \prod_{i=1}^{{k_n}} \sum_{b_{i} \in \mathbb{N}}  \frac{ (\frac{ \alpha }{ d } + \bar{b}_{i-1} )_{(b_{i})} }{ (\frac{a_{i}(\sigma)+\alpha}{d}+ \bar{b}_{i-1} )_{(b_{i})}}\\
 & \overset{\text{(a)}}{=} \sum_{\sigma \in S_{{k_n}}} \prod_{i=1}^{{k_n}}  \frac{(\frac{\alpha}{d})_{({k_n})}}{\frac{a_{i}(\sigma)}{d}-({k_n}-i+1)}\\
 &= d^{{k_n}}\Big(\frac{\alpha}{d}\Big)_{({k_n})}\sum_{\sigma \in S_{{k_n}}} \prod_{i=1}^{{k_n}}\frac{1}{a_{i}(\sigma)-({k_n}-i+1)d}\\
 & \overset{\text{(b)}}{=} \frac{d^{{k_n}} }{\prod_{c \in C}(|c|-d)}\Big(\frac{\alpha}{d}\Big)_{({k_n})},
\end{align*}
where step (a) follows from Lemma \ref{lemma:D} and step (b) follows from Lemma \ref{lemma:C}.
\end{proof}

\section*{Acknowledgement}
The authors would like to thank \href{http://www.bernardonipoti.com/}{Bernardo Nipoti} for fruitful discussions that initiated this work.

\bibliographystyle{apalike}
\bibliography{biblio}

\end{document}